\renewcommand{\Re}{{\operatorname{Re}\,}}
\renewcommand{\epsilon}{\varepsilon}
\newcommand{\Vt}{\Vert}
\newcommand{\supp}{{\operatorname{supp\,}}}
\renewcommand{\phi}{\varphi}
\newtheorem{theo}{{\sc Theorem}}
\newtheorem{cor}{{\sc Corollary}}[section]
\newtheorem{lem}[cor]{{\sc Lemma}}
\numberwithin{equation}{section}
\newenvironment{rem}{\medskip\noindent{\it Remark:\/} }{\medskip}
\title[The $L^p$ restriction bounds]
{The $L^p$ restriction bounds for Neumann data on surface}
\author{Xianchao Wu}
\email{xianchao.wu@whut.edu.cn} 
\date{}
\begin{document}

\maketitle
\begin{abstract}
Let $\{u_\lambda\}$ be a sequence of $L^2$-normalized Laplacian eigenfunctions on a compact two-dimensional smooth Riemanniann manifold $(M,g)$. We seek to get an $L^p$ restriction bounds of the Neumann data $ \lambda^{-1} \partial_\nu u_{\lambda}\,\vline_\gamma$ along a unit geodesic $\gamma$. Using the $T$-$T^*$ argument one can transfer the problem to an estimate of the norm of a Fourier integral operator and show that such bound is $O(\lambda^{-\frac{1}p+\frac{3}2})$. The Van De Corput theorem (Lemma \ref{VDC}) plays the crucial role in our proof. Moreover, this upper bound is shown to be optimal.

\end{abstract}

\section{Introduction}
Let $(M,g)$ be a compact smooth Riemannian manifold without boundary. Consider functions satisfying
\begin{equation}
-\Delta_g u_\lambda= \lambda^2 u_\lambda(x), \quad x\in M.
\end{equation}
So, in our case, $\lambda$ is the frequency of the eigenvalue, and also $u_\lambda$ is an $L^2$-normalized eigenfunction with eigenvalue $\lambda$ of $\sqrt{-\Delta_g}$.

Christianson-Hassell-Toth \cite{CHT} and Tacy \cite{Tacy2} showed the boundedness of the Neumann data restricted to a smooth oriented separating hypersurface $\Gamma \subset M$. That is
\begin{equation}\label{known}
\| \lambda^{-1} \partial_\nu u_{\lambda} \|_{L^2(\Gamma)}=O(1),
\end{equation}
where $\nu$ is the normal to the hypersurface $\Gamma$. This estimate can be seen as a statement of non-concentration since the Neumann data $\| \lambda^{-1} \partial_\nu u_{\lambda} \|_{L^2(\Gamma)}$ is only $O(\lambda^{\frac{1}4})$ by \cite{BGT} and from \cite{CHT} we know that \eqref{known} is saturated by considering a sequence of spherical harmonics. We should point out that \eqref{known} is also not explicitly stated in \cite{Tat}.

Note that the proof of \eqref{known} in \cite{CHT} uses Rellich's identity and relies on the result of the restricted $L^2$ upper bound of eigenfunctions in \cite{BGT}. In what follows, we let $\gamma$ denote a geodesic segment of length one and assume that the injectivity radius of $(M,g)$ is 10 or more. In this paper, we aim to give a short direct proof of an $L^p$ restriction bounds of the Neumann data $ \lambda^{-1} \partial_\nu u_{\lambda}\,\vline_\gamma$. Indeed our main theorem is as following,

\begin{theo}\label{main0}
Assume that $(M,g)$ is a compact two-dimensional smooth Riemannian manifold without boundary. There is a $\lambda_0>0$ so that for $p\geq 2$ one has
\begin{equation}\label{main eq}
\| \partial_\nu u_\lambda\|_{L^p(\gamma)} \leq C \lambda^{-\frac{1}p+ \frac{3}2}, \quad \lambda>\lambda_0.
\end{equation}
\end{theo}

\begin{rem}
From the proof of Lemma \ref{general kernel estimate}, one can see that the assumption which $\gamma$ is a geodesic is crucial. However, in a forthcoming paper, we shall use a different method to remove this restriction and prove that \eqref{main eq} holds for any smooth curve.
\end{rem}

This paper is organized in the following way. In Section \ref{section2}, we represent the eigenfunction $u_\lambda$ using the spectrum projector operator and compute its normal derivative along the geodesic $\gamma$. By the $T$-$T^*$ argument, we can transfer our problem \eqref{main eq} to an estimate of the norm of a Fourier integral operator (FIO). The key observation is that one term in the amplitude of the FIO helps us to improve the regularity by Van De Corput theorem (Lemme \ref{VDC}). In Section \ref{section3}, we also prove the optimality of \eqref{main eq}.

\bigskip
\section{Proof of Theorem \ref{main0}}\label{section2}
Using a partition of unity, we assume that $\gamma$ and $\mathcal{T}_{\frac{1}2}(\gamma)$ which is a geodesic tube of width $\frac{1}2$ about $\gamma$ both are contained in one coordinate patch and $x_0=0$. We may work in local coordinates so that $\gamma$ is just
\begin{equation*}
\{(x, 0): -1/2\leq x\leq 1/2\},
\end{equation*}
and we identify the point $x\in \gamma$ as its coordinate $(x, 0)$.

Fix a real-valued even function $\chi\in \mathcal{S}(\mathbb{R})$ having the property that $\chi(0)=1$ and $\hat{\chi}(t)=0$, $|t|\geq\frac{1}2$, where $\hat{\chi}$ denotes the Fourier transform of $\chi$. Then since $\chi_\lambda(u_\lambda):=\chi(\sqrt{-\Delta_g}-\lambda)u_\lambda=u_\lambda$, in order to prove \eqref{main eq}, it suffices to show that
\begin{equation}\label{upshot}
\Vt \partial_\nu \big(\chi(\sqrt{-\Delta_g}-\lambda)u_\lambda\big) \Vt_{L^p(\gamma)}=O(\lambda^{\frac{3}2-\frac{1}p}).
\end{equation}
As shown in \cite{So2017}, the Schwartz kernel of $\chi_\lambda$ takes the form
\begin{equation}
\chi(\sqrt{-\Delta_g}-\lambda)u_\lambda(x)=\lambda^{\frac{1}2} \int_{M} e^{i\lambda \psi(x,y)} a(x,y,\lambda) u_\lambda(y) dy +R_\lambda(u_\lambda)
\end{equation}
where $\psi(x,y)=-d_g(x,y)$ is the geodesic distance with respect to $g$ between $x$ and $y$. Furthermore $a(x,y,\lambda)$ is smooth with all derivatives bounded uniformly in $\lambda$, and is supported where $d_g(x,y) \in [ c_1 \epsilon,c_2 \epsilon]$ for some $c_1, c_2 > 1$. On the other hand, $R_\lambda$ is smooth with all derivatives $O(\lambda^{-N})$ for every $N$.



To compute $\partial_\nu \big(\chi(\sqrt{-\Delta_g}-\lambda)u_\lambda\big)$, $\partial_\nu$ shall act on $e^{i\lambda \psi(x,y)}$, $a(x,y,\lambda)$ and $R_\lambda(u_\lambda)$. To emphasize the variable dependence of $\nu$, we shall use $\nu_x$ instead of $\nu$ in the following. However, the term from $\partial_{\nu_x} a(x,y,\lambda)$ will not bring any troubles to us since we can exactly follow the procedure in \cite{BGT} to get an upper bound $\lambda^{\delta(p)}$, here $\delta(p)=1/2-1/p$ if $4\leq p\leq +\infty$ and   $\delta(p)=1/4$ if $2\leq p\leq 4$. And the term from $\partial_{\nu_x} R_\lambda(u_\lambda)$ is trivially bounded. Hence, the main difficulty comes from $\partial_{\nu_x} e^{i\lambda \psi(x,y)}$.

Notice for points $x$ and $y$, we denote by $e$ the unit vector in $T_xM$ that generates the short geodesic between $x$ and $y$, and write $\theta=\theta(x,y)$ for the angle between the vector $e$ and the normal vector to $\gamma$ at $x$. Then we can express
\begin{equation}
\partial_{\nu_x} \psi_(x,y) =-\sin(\frac{\pi}2-\theta)=-\cos \theta.
\end{equation}
In order to prove \eqref{upshot}, it suffices to show the bound
\begin{equation}
\left\Vt\int_{M}\cos\theta \, e^{i\lambda \psi(x,y)} a(x,y,h) u_\lambda(y) dy \right\Vt_{L^p(\gamma)}=O(\lambda^{-\frac{1}p}).
\end{equation}

We define
\begin{equation}
T_\lambda f(x):= \int_M \cos\theta\, e^{i \lambda \psi(x ,y)} a(x,y, \lambda) f(y) dy.
\end{equation}
It suffices to show that
\begin{equation}
\Vt T_\lambda \Vt_{\mathcal{L}(L^{2}(M); L^{p}(\gamma))} \leq C\lambda^{-\frac{1}p}.
\end{equation}
We shall work in geodesic polar coordinates about $x_0$. So $y=\exp_0(r\omega)$, $r>0$, $\omega\in \mathbb{S}^1$ and $c_1\epsilon\leq r \leq c_2\epsilon$ since the amplitude $a(x,y,\lambda)$ is supported where $d_g(x,y) \in [ c_1 \epsilon,c_2 \epsilon]$ for some $c_1, c_2 > 1$. We can write
\begin{equation}
T_\lambda f (x)= \int_{c_1\epsilon}^{c_2 \epsilon} \int_{\mathbb{S}^1} \cos\theta_{x, \omega} \, e^{i\lambda \psi_r(x,\omega)} a_r(x,\omega, \lambda) f_r(\omega) d\omega dr,
\end{equation}
with
\[\psi_r(x,\omega)=\psi(x,y),\quad f_r(\omega)=f(y), \quad a_r(x,\omega, \lambda)= \kappa(r,\omega) a(x,y, \lambda)\]
for some smooth function $\kappa$, also $\theta_{x, \omega}$ depends on $x$ and the angle $\omega$ of $y$.
Set $T_\lambda^r$ to be the operator with an integral kernel
\begin{equation*}
\cos\theta_{x,\omega} \, e^{i\lambda \psi_r(x,\omega)} a_r(x,\omega, \lambda),
\end{equation*}
hence the integral kernel of $T_\lambda^r (T_\lambda^r)^*$ is
\begin{equation}
K(x, y)=\int_{\mathbb{S}^1} \cos\theta_{x,\omega} \cos\theta_{y,\omega}\, e^{i\lambda [\psi_r(x,\omega)-\psi_r(y,\omega)]}a_r(x, \omega, \lambda) \overline{a_r}(y,\omega, \lambda) d\omega.
\end{equation}
Using $T$-$T^*$ argument, we claim that one only need to show
\begin{equation}\label{upshot1}
\Vt T_\lambda^r \Vt^2_{\mathcal{L}(L^{2}(\mathbb{S}^1); L^{p}(\gamma))}= \|T_\lambda^r (T_\lambda^r)^*\|_{\mathcal{L}(L^{p'}(\gamma); L^p(\gamma))} \leq C\lambda^{-\frac{2}p}.
\end{equation}
Indeed with the help of \eqref{upshot1} we have
\begin{equation*}
\|T_\lambda f\|_{L^p(\gamma)} \leq  \int_{c_1\epsilon}^{c_2 \epsilon} \Vt T_\lambda^r f_r \Vt_{L^p(\gamma)} dr\leq C\lambda^{-\frac{1}p}\int_{c_1\epsilon}^{c_2 \epsilon} \Vt f_r \Vt_{L^p(\gamma)} dr  \leq c\lambda^{-\frac{1}p} \Vt f\Vt_{L^2(M)}.
\end{equation*}

\noindent The main proof is inspired by \cite{BGT} with noticing that the extra term $ \cos\theta_{x,\omega} \cos\theta_{y,\omega}$ shall improve the regularity. In fact following Van De Corput theorem plays the crucial role during the proof.

\begin{lem}\label{VDC}
Suppose that $\Phi(0)$, $\Phi'(0)=0$, and $\Phi'(y)\neq 0$ on $\supp a(\cdot, \lambda)\backslash \{0\}$.  Suppose that $a(\cdot, \lambda)\in C_0^\infty(\mathbb{R})$,
\begin{equation*}
|\partial_y^j a(y, \lambda) | \leq C_j \quad \text{for all}\,\, j
\end{equation*}
and suppose further that $a(0, \lambda)=0$. Then if $\Phi''(0)\neq 0$, one has
\begin{equation}
\Big| \int_0^\infty e^{i \lambda \Phi(y)} a(y, \lambda) dy \Big| \leq C\lambda^{-1}.
\end{equation}
\end{lem}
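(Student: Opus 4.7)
The plan is to combine the two vanishing conditions $\Phi'(0)=0$ and $a(0,\lambda)=0$ into a single integration by parts that recovers the full factor $\lambda^{-1}$, rather than the $\lambda^{-1/2}$ that nondegenerate Van der Corput would give from the critical point alone. First I would apply Hadamard's lemma to the amplitude: since $a(\cdot,\lambda)\in C_c^\infty(\R)$ vanishes at $0$ with derivative bounds uniform in $\lambda$, we may write
\begin{equation*}
a(y,\lambda)=y\,b(y,\lambda),\qquad b(y,\lambda)=\int_0^1 \partial_y a(ty,\lambda)\,dt,
\end{equation*}
with $b(\cdot,\lambda)\in C_c^\infty(\R)$ and $|\partial_y^j b(y,\lambda)|$ bounded uniformly in $\lambda$. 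Similarly, $\Phi'(0)=0$ together with $\Phi''(0)\neq 0$ gives $\Phi'(y)=y\,\phi(y)$ with $\phi$ smooth and $\phi(0)=\Phi''(0)\neq 0$; combined with the hypothesis $\Phi'(y)\neq 0$ on $\supp a\setminus\{0\}$, this shows $\phi$ is bounded away from zero on $\supp a$.

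The central algebraic identity is then
\begin{equation*}
a(y,\lambda)\,e^{i\lambda\Phi(y)}=\frac{y\,b(y,\lambda)}{i\lambda\,\Phi'(y)}\,\frac{d}{dy}e^{i\lambda\Phi(y)}=\frac{b(y,\lambda)}{i\lambda\,\phi(y)}\,\frac{d}{dy}e^{i\lambda\Phi(y)},
\end{equation*}
where the cancellation $y/\Phi'(y)=1/\phi(y)$ removes what would otherwise be a $1/y$ singularity at the critical point. I would then integrate by parts on $[0,\infty)$: the boundary term at $+\infty$ vanishes because $b$ has compact support, while the boundary term at $y=0$ contributes $-b(0,\lambda)/(i\lambda\,\Phi''(0))=O(\lambda^{-1})$. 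The remaining bulk integral
\begin{equation*}
\int_0^\infty e^{i\lambda\Phi(y)}\,\frac{d}{dy}\!\left[\frac{b(y,\lambda)}{i\lambda\,\phi(y)}\right]dy
\end{equation*}
has amplitude of size $O(\lambda^{-1})$, uniformly in $y$, and is supported on a bounded set, hence is trivially $O(\lambda^{-1})$.

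The only delicate point --- indeed the entire content of the lemma --- is making sense of the factor $y/\Phi'(y)$ at the critical point $y=0$; once Hadamard's trick on both $a$ and $\Phi'$ exhibits the desired cancellation, the argument requires no further iteration of nonstationary phase and I expect no substantial obstacle.
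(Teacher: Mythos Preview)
Your proof is correct and takes a cleaner route than the paper's. The paper proceeds by the standard Van der Corput decomposition: it introduces a cutoff at scale $\delta$, bounds the near-critical piece $|y|\lesssim\delta$ by $C\delta^2$ using only $|a(y,\lambda)|\le Cy$, bounds the far piece $|y|\gtrsim\delta$ by $C\lambda^{-N}\delta^{2-2N}$ via $N$-fold integration by parts with the operator $L^*=\partial_y\,(i\lambda\Phi')^{-1}$, and then optimizes by choosing $\delta=\lambda^{-1/2}$. Your approach instead factors $a=yb$ and $\Phi'=y\phi$ via Hadamard's lemma so that the integration-by-parts weight $y/\Phi'=1/\phi$ is already smooth and bounded at the critical point; a single integration by parts then gives $O(\lambda^{-1})$ with no splitting and no optimization. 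The paper's method is more robust in that it would adapt to weaker (e.g.\ H\"older-type) vanishing of $a$ at the critical point, while yours makes the mechanism completely transparent: the extra factor of $y$ in the amplitude exactly cancels the $1/y$ singularity of $1/\Phi'$, which is precisely why one gains the full $\lambda^{-1}$ instead of $\lambda^{-1/2}$. Note also that your quotient $b/\phi$ is globally smooth with compact support, since any zero of $\phi$ necessarily lies outside $\supp a=\supp b$; so the integration by parts is justified on all of $[0,\infty)$ without further cutoffs.
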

\begin{proof}
Set 
\[I= \int_0^\infty  e^{i \lambda \Phi(y)} a(y, \lambda) dy= \int_0^1  e^{i \lambda \Phi(y)} a(y, \lambda) dy + \int_1^\infty  e^{i \lambda \Phi(y)} a(y, \lambda) dy.\]
Notice \[ \Big|\int_1^\infty  e^{i \lambda \Phi(y)} a(y, \lambda) dy\Big|\leq C\lambda^{-N}\] by non-stationary phase theorem.
To estimate $ \int_0^1  e^{i \lambda \Phi(y)} a(y, \lambda) dy$, we choose $\rho(y)\in C_0^\infty(\mathbb{R})$ satisfying that
\begin{equation}
\rho=1 \quad\text{when} \,\, |y|<1,\quad\quad \rho=0 \quad\text{when} \,\, |y|>2.
\end{equation}
Then for some small $\delta>0$ which will be determined later, we split the integral
\begin{align*}
I=&\int_0^1 \rho(y/\delta) e^{i \lambda \Phi(y)} a(y, \lambda) dy+\int_0^1 \big(1-\rho(y/\delta) \big) e^{i \lambda \Phi(y)} a(y, \lambda) dy \\
=&I_1+I_2.
\end{align*}
For the first integral, using $|a(y, \lambda)| \leq Cy\,\, \text{if}\,\, y\,\,\text{is closed to}\,\, 0$ we have
\begin{equation*}
|I_1|\leq C \int_0^{2\delta} y dy =C'\delta^2.
\end{equation*}
For the second one, we need to integrate by parts. Let
\begin{equation}
L^*(y, D)=\frac{\partial} {\partial y} \frac{1}{i \lambda \Phi'}.
\end{equation}
Then for $N=0,1,2, \dots$
\begin{align}\label{I2}
|I_2|=&\Big| \int_0^1  e^{i \lambda \Phi(y)} \big(L^*(y, D)\big)^N \Big(  \big(1-\rho(y/\delta) \big) a(y, \lambda) \Big) dy \Big| \nonumber \\
 \leq& \int_{\delta}^1 \Big| \big(L^*(y, D)\big)^N \Big(  \big(1-\rho(y/\delta) \big) a(y, \lambda) \Big) \Big| dy
\end{align}
Notice
\[\Big|\frac{1}{\Phi'(y)}\Big| \leq C\frac{1}y \quad \text{and}\,\, |a(y, \lambda)| \leq Cy, \]
and using the product rule for differentiation implies that the last integrand in \eqref{I2} is majorized by 
\begin{equation*}
\lambda^{-N} y^{1-N}\delta^{-N}
\end{equation*}
for sufficiently large $N$ with noticing that $y\geq \delta$. Hence we can dominate $|I_2|$ by
\[C\lambda^{-N}\delta^{2-2N}.\]
In conclusion, we get
\begin{equation*}
|I| \leq C\big(\delta^2 + \lambda^{-N}\delta^{2-2N}\big).
\end{equation*}
The right side would achieve its smallest when the two summands agree, that is $\delta=\lambda^{-\frac{1}2}$, which gives
\begin{equation*}
|I|\leq C\lambda^{-1}
\end{equation*}
as desiered.

\end{proof}

\noindent Then we can estimate the kernel $K(x,y)$.
\begin{lem}\label{general kernel estimate}
There exists $\delta>0$ such that for $|x-y|<\delta$, here $x,y\in \gamma$, one has
\begin{equation}
|K(x,y)| \lesssim (1+\lambda |x-y|)^{-1}.
\end{equation}
\end{lem}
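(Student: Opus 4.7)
The plan is to write $K(x,y)$ as an oscillatory integral in $\omega$, identify its critical points, exploit the vanishing of the amplitude at those points (coming from the extra factor $\cos\theta_{x,\omega}\cos\theta_{y,\omega}$), and rescale so that Lemma \ref{VDC} applies with $\mu = \lambda|x-y|$ in place of $\lambda$. The trivial bound $|K(x,y)| \leq C$ handles the regime $\lambda|x-y| \leq 1$, and the Van der Corput estimate handles $\lambda|x-y| \geq 1$; combining the two yields the claimed decay $|K(x,y)| \lesssim (1+\lambda|x-y|)^{-1}$.

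First I would carry out the geometric analysis in geodesic normal coordinates at $x_0$, in which $\gamma$ agrees with the $x$-axis to leading order. A computation using the Gauss lemma shows that for $x,y \in \gamma$ and $z(\omega) = \exp_0(r\omega)$, the derivative $\Phi'(\omega) := \partial_\omega[d_g(y,z) - d_g(x,z)]$ factors, up to smooth nonvanishing terms, as $\sin\omega \cdot g(\omega;x,y)$, so the critical points on $\supp a_r$ occur exactly where $z(\omega_*) \in \gamma$, provided $\delta$ is small enough that no other zero of $g$ intervenes on this support. At such a critical point the unit vector from $x$ to $z(\omega_*)$ is tangent to $\gamma$ and hence perpendicular to $\nu_x$, so $\cos\theta_{x,\omega_*} = 0$ and likewise $\cos\theta_{y,\omega_*} = 0$. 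A Taylor expansion further yields $\Phi(\omega_*) = \pm(x-y) + O(|x-y|^2)$ and $\Phi''(\omega_*) = \pm(y-x)\bigl(1+O(\epsilon)\bigr)$, so $|\Phi''(\omega_*)| \sim |x-y|$.

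Next I would localize the $\omega$-integral using a smooth partition of unity $1 = \phi_0 + \phi_\pi + \phi_\infty$, with $\phi_0,\phi_\pi$ supported in small neighborhoods of the two critical points $\omega_* \in \{0,\pi\}$ and $\phi_\infty$ supported where $|\Phi'(\omega)| \geq c > 0$. Iterated integration by parts on the $\phi_\infty$-piece contributes $O(\lambda^{-N})$. On each critical piece, since $\Phi \equiv 0$ whenever $x = y$, the Hadamard lemma factors
\begin{equation*}
\Phi(\omega) - \Phi(\omega_*) = (x-y)\,\tilde\Phi(\omega;x,y),
\end{equation*}
with $\tilde\Phi(\omega_*) = \tilde\Phi'(\omega_*) = 0$, $\tilde\Phi''(\omega_*) \neq 0$ uniformly in $(x,y)$, and $\tilde\Phi$ uniformly smooth. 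Setting $\mu = \lambda|x-y|$ and pulling out the unimodular factor $e^{i\lambda\Phi(\omega_*)}$, each critical piece becomes
\begin{equation*}
K_j(x,y) = e^{i\lambda\Phi(\omega_*)}\int \phi_j(\omega)\,A(\omega;x,y)\,e^{\pm i\mu\tilde\Phi(\omega;x,y)}\,d\omega,
\end{equation*}
where the amplitude $\phi_j A$ is smooth, uniformly bounded, and vanishes at $\omega_*$. Translating $\omega_*$ to $0$, splitting the integration into its two one-sided parts, and applying Lemma \ref{VDC} on each side (with $\mu$ in the role of $\lambda$, $\tilde\Phi$ in the role of $\Phi$, and $\phi_j A$ in the role of $a$) gives $|K_j(x,y)| \leq C\mu^{-1} = C(\lambda|x-y|)^{-1}$. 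Taking the minimum with the trivial bound $|K(x,y)| \leq C$ completes the argument.

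The main obstacle is the geometric verification in the first step, namely that every critical point of $\Phi$ on $\supp a_r$ is a point where $\cos\theta_{x,\omega}\cos\theta_{y,\omega}$ vanishes. This is exactly where the geodesic hypothesis on $\gamma$ enters: the argument relies on the fact that the unit tangent from one point of $\gamma$ to another is parallel to $\gamma$, a property that fails for a general curve, as noted in the remark following Theorem \ref{main0}.
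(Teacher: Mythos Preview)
Your proposal is correct and follows essentially the same route as the paper: factor $|x-y|$ out of the phase (the paper does this at the outset via Taylor's formula to obtain a normalized phase $\Phi(x,y,\sigma,\omega)=\langle\sigma,\Psi\rangle$, whereas you do it later via Hadamard to produce $\tilde\Phi$), split into a non-critical region and neighborhoods of the two stationary points, use the geodesic hypothesis on $\gamma$ to see that $\cos\theta_{x,\omega}\cos\theta_{y,\omega}$ vanishes at those stationary points, and then apply Lemma~\ref{VDC} with large parameter $\mu=\lambda|x-y|$. One small slip to correct: on the $\phi_\infty$-piece the unnormalized phase satisfies only $|\Phi'(\omega)|\gtrsim |x-y|$, not $|\Phi'|\geq c>0$, so integration by parts yields $O\bigl((\lambda|x-y|)^{-N}\bigr)$ rather than $O(\lambda^{-N})$; this is exactly the bound the paper obtains on $I_1\cup I_3$ and is still enough for the conclusion.
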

\begin{proof}
Recall that 
\begin{equation*}
K(x, y)=\int_{\mathbb{S}^1} \cos\theta_{x,\omega} \cos\theta_{y,\omega}\, e^{i\lambda [\psi_r(x,\omega)-\psi_r(y,\omega)]}a_r(x, \omega, \lambda) \overline{a_r}(y,\omega, \lambda) d\omega.
\end{equation*}
Applying Toylor's formula, one has
\begin{equation}
\psi_r(x,\omega)-\psi_r(y ,\omega)=\left< x-y, \Psi(x, y, \omega) \right>=|x-y|\left< \sigma, \Psi(x, y, \omega) \right>,
\end{equation}
here 
\begin{equation*}\sigma=\frac{x- y}{|x- y|} \in \mathbb{S}^1 \quad\text{and}\quad \Psi(x, y, \omega) =\int_0^1 \nabla_x \psi_r(y+t(x-y), \omega) dt.
\end{equation*}

Set
\begin{equation*}
\Phi(x, y, \sigma, \omega)=\left< \sigma, \Psi(x, y, w) \right>,
\end{equation*}
also we parametrize the circle $\mathbb{S}^1$ as
\begin{equation*}
\omega=\omega(w)=(\cos w, \sin w), \quad w\in[0, 2\pi],
\end{equation*}
and suppose that
\begin{equation*}
\sigma=(\cos \alpha, \sin \alpha), \quad \alpha\in [0,2\pi].
\end{equation*}
So we have
\begin{equation*}
K(x, y)= \int_{0}^{2\pi}  \cos\theta_{x,\omega(w)} \cos\theta_{y,\omega(w)}\, e^{i\lambda |x-y|\Phi(x, y, \sigma, \omega(w))}a_r(x, \omega(w), \lambda) \overline{a_r}(y,\omega(w), \lambda) dw.
\end{equation*}
Now we split  the integral kernel $K(x,y)$ into 4 parts according to disjoint 4 intervals $I_1(\sigma)$, $I_2(\sigma)$, $I_3(\sigma)$ and $I_4(\sigma)$ which shall be specified later,
\begin{equation}
K(x,y)=K_{I_1}(x,y)+K_{I_2}(x,y)+K_{I_3}(x,y)+K_{I_4}(x,y),
\end{equation}
where
\begin{align*}
K_{I_j}(x,y)=\int_{I_j(\sigma)}  \cos\theta_{x,\omega(w)} \cos\theta_{y,\omega(w)}\, e^{i\lambda |x-y|\Phi(x, y, \sigma, \omega(w))}a_r(x, \omega(w), \lambda) \overline{a_r}(y,\omega(w), \lambda) dw, \\
\quad j=1,2,3,4.
\end{align*}

Next we deal with the phase function $\Phi(x, y, \sigma, \omega(w))$.
Thanks to \cite[Lemma 3.1]{BGT}, one has
\begin{equation*}
\Psi(0, 0, \omega)=\big(\partial_{x_1} \psi_r(0, \omega), \partial_{x_2} \psi_r(0, \omega)\big)=\omega=(\cos w, \sin w).
\end{equation*}
Hence
\begin{equation*}
\Phi(0,0, \sigma, \omega)=\left< \sigma, \Psi(0, 0, w) \right>=\left<\sigma, \omega \right>=\cos (w-\alpha).
\end{equation*}

Compute
\begin{equation}
\Phi'_w(0,0, \sigma, \omega(w))=-\sin(w-\alpha)
\end{equation}
and 
\begin{equation}
\Phi''_{ww}(0,0, \sigma, \omega(w))=-\cos(w-\alpha).
\end{equation}
Notice when $w$ equals $\alpha$ or $\alpha+\pi$, $|\Phi'_w(0,0, \sigma, \omega(w))|=0$ and $|\Phi''_{ww}(0,0, \sigma, \omega(w))|=1$. So we can split $[0, 2\pi]$ into disjoint 4 intervals $I_1(\sigma)$, $I_2(\sigma)$, $I_3(\sigma)$ and $I_4(\sigma)$, where $I_2(\sigma)$ and $I_4(\sigma)$ are neighborhoods of $\alpha$ and $\alpha+\pi$ respectively, such that
\begin{equation*}
|\Phi'_w(0,0, \sigma, \omega(w))|\geq \frac{\sqrt{2}}2,  \,\, |\Phi''_w(0,0, \sigma, \omega(w))|\leq \frac{\sqrt{2}}2, \quad w\in I_1(\sigma)\cup I_3(\sigma)
\end{equation*}
and
\begin{equation*}
|\Phi'_w(0,0, \sigma, \omega(w))|\leq \frac{\sqrt{2}}2, \,\, |\Phi''_w(0,0, \sigma, \omega(w))|\geq \frac{\sqrt{2}}2,\quad w\in I_2(\sigma)\cup I_4(\sigma).
\end{equation*}
By continuity, one has
\begin{equation} \label{1and3}
|\Phi'_w(x,y, \sigma, \omega(w))|\geq \frac{\sqrt{2}}4,\quad w\in I_1(\sigma)\cup I_3(\sigma)
\end{equation}
and
\begin{equation} \label{2and4}
|\Phi''_w(x,y, \sigma, \omega(w))|\geq \frac{\sqrt{2}}4,\quad w\in I_2(\sigma)\cup I_4(\sigma).
\end{equation}

Thanks to the noncritical condition \eqref{1and3}, one can integrate by parts to show that $K_{I_1}(x,y)$ and $K_{I_3}(x,y)$ are bounded by
\[C(1+\lambda|x-y|)^{-1}.\]

\begin{figure}
\begin{tikzpicture}
\draw[thick] (-4,0) -- (0,0) node[below]{$x_0$} -- (1,0) node[below]{$x$} -- (2,0) node[below]{$y$} -- (4,0) node[below]{$\gamma$};
\draw[dashed,thick] (1,0) -- (1,1.5);
\draw[dashed,thick] (2,0) -- (2,1.5);
\draw[dashed,thick] (0,0)--(2.5, 1.25);
\draw[dashed,thick] (1,0)--(2.5, 1.25);
\draw[dashed,thick] (2,0)--(2.5, 1.25);
\draw[fill] (0,0) circle [radius=1pt];
\draw[fill] (1,0) circle [radius=1pt];
\draw[fill] (2,0) circle [radius=1pt];
\draw[thick] (1.3, 0.3) arc (30: 90: 0.3);
\draw[thick] (2.2, 0.5) arc (60: 90: 0.4);
\node at (1.3, 0.6) {$\theta_{x,w}$};
\node[right] at (2.2, 0.5)  {$\theta_{y,w}$};
\node[right] at (2.5, 1.25) {$\exp_0(r\omega)$};
\end{tikzpicture}
\caption{}
\label{fig}
\end{figure}

Next we shall deal with the bounds of  $K_{I_2}(x,y)$ and $K_{I_4}(x,y)$. The key observation is that $\cos\theta_{x,\omega} \cos\theta_{y,\omega}=0$ when $w=\alpha$ or $w=\alpha+\pi$ due to the fact that $x$ and $y$ both are on the geodesic $\gamma$ (Figure \ref{fig}). When $\lambda^{-1}\leq |x-y|< \delta$, thanks to \eqref{2and4} and Lemma \ref{VDC}, $K_{I_2}(x,y)$ and $K_{I_4}(x,y)$ are bounded by
\[C\lambda^{-1}|x-y|^{-1} \leq C'(1+\lambda|x-y|)^{-1}.\]
On the other hand, when $|x-y|\leq \lambda^{-1}$, they are trivially bounded by $O(1)$ which is also less than $C(1+\lambda|x-y|)^{-1}$.
These complete the proof.
\end{proof}

\begin{rem}
From above argument, the assumption that $\gamma$ is geodesic plays an essential role.
\end{rem}

Using Lemma \ref{general kernel estimate} and Young's inequality,
\begin{align*}
\|T_\lambda^r(T_\lambda^r)^* f\|_{L^p(\gamma)} &\lesssim \| \int_{-\frac{1}2}^{\frac{1}2} (1+\lambda|x-y|)^{-1} f(y)) dy  \|_{L^p(\gamma)} \\
&\lesssim \|(1+\lambda|x|)^{-1}\|_{L^{\frac{p}2}({-\frac{1}2}, {\frac{1}2})} \|f\|_{L^{p'}(\gamma)}.
\end{align*}
For $p>2$
\begin{equation}
\|(1+\lambda|x|)^{-1}\|_{L^{\frac{p}{2} }({-\frac{1}2},{\frac{1}2})}\leq \lambda^{-\frac{2}p} \Big( \int_0^{C\lambda} (1+\tau)^{-\frac{p}2} d\tau \Big)^{\frac{2}p} \lesssim \lambda^{-\frac{2}p}.
\end{equation}
For $p=2$, notice that from the proof of Lemma \ref{general kernel estimate}, for $|x-y|\geq \lambda^{-1}$,
\[|K(x,y)| \lesssim (1+ \lambda |x-y|)^{-1},\]
and for $|x-y|\leq \lambda^{-1}$, $|K(x,y)|$ is $O(1)$.

Applying Young's inequality, one can get
\begin{align*}
\|T_\lambda^r(T_\lambda^r)^* f\|_{L^2(\gamma)} \leq \Big| \int_{|x-y|\geq \lambda^{-1}} K(x,y) f(y) dy \Big|+\Big| \int_{|x-y|\leq \lambda^{-1}} K(x,y) f(y) dy \Big| \\
\lesssim \lambda^{-1} \|f\|_{L^{2}(\gamma)}.
\end{align*}
Hence one can get \eqref{upshot1}.

\bigskip
\section{The optimality of \eqref{main eq}} \label{section3}
In this section we prove lower bounds for the operator $\mathcal{T}:=\partial_\nu \chi(\sqrt{-\Delta_g}-\lambda)$ and we will use this bound to get the optimality of \eqref{main eq}. The main proof is motivated by \cite{BGT}.

\begin{lem}
There exists $c>0$ such that for any $\lambda\geq 1$,
\begin{equation}
\|\mathcal{T}\|_{\mathcal{L}(L^2(M); L^p(\gamma)) }\geq c\lambda^{\frac{3}2-\frac{1}p}.
\end{equation}
\end{lem}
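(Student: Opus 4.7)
The plan is to exhibit an explicit $f\in L^{2}(M)$ with $\|f\|_{L^{2}(M)}=1$ for which $\|\mathcal{T} f\|_{L^{p}(\gamma)}\gtrsim \lambda^{3/2-1/p}$. Following the BGT philosophy, I would take $f$ to be (a normalization of) the complex conjugate of a row of the Schwartz kernel of $\mathcal{T}$. From the representation in Section~\ref{section2}, the kernel of $\mathcal{T}=\partial_{\nu}\chi(\sqrt{-\Delta_{g}}-\lambda)$ is, up to an $O(\lambda^{1/2})$ term arising when $\partial_{\nu_{x}}$ hits the amplitude $a$ and an $O(\lambda^{-N})$ remainder from $R_{\lambda}$,
$$K_{\mathcal{T}}(x,y)=-i\lambda^{3/2}\cos\theta_{x,\omega(y)}\,e^{i\lambda\psi(x,y)}\,a(x,y,\lambda).$$
Fix $x_{0}=0\in\gamma$ and set $f(y)=c_{\lambda}\overline{K_{\mathcal{T}}(x_{0},y)}$, with $c_{\lambda}>0$ chosen so that $\|f\|_{L^{2}(M)}=1$.

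A direct computation in polar geodesic coordinates $y=\exp_{0}(r\omega)$ gives
$$\|K_{\mathcal{T}}(x_{0},\cdot)\|_{L^{2}(M)}^{2}=\lambda^{3}\int_{c_{1}\epsilon}^{c_{2}\epsilon}\int_{\mathbb{S}^{1}}\cos^{2}\theta_{x_{0},\omega}\,|a(x_{0},\exp_{0}(r\omega),\lambda)|^{2}\,r\,d\omega\,dr+O(\lambda^{2}).$$
Since $\cos^{2}\theta_{x_{0},\omega}$ vanishes on $\mathbb{S}^{1}$ only at the two directions parallel to $\gamma$ (a measure-zero set) and $a$ is of unit size on the annulus, this equals $C_{0}\lambda^{3}$ for some $C_{0}>0$, and therefore $c_{\lambda}\sim \lambda^{-3/2}$.

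The key identity is
$$\mathcal{T} f(x)=c_{\lambda}\int K_{\mathcal{T}}(x,y)\overline{K_{\mathcal{T}}(x_{0},y)}\,dy=c_{\lambda}\,K_{\mathcal{T}\mathcal{T}^{*}}(x,x_{0}),$$
where $K_{\mathcal{T}\mathcal{T}^{*}}$ is the kernel of $\mathcal{T}\mathcal{T}^{*}$. At $x=x_{0}$ this evaluates to $c_{\lambda}\|K_{\mathcal{T}}(x_{0},\cdot)\|_{L^{2}}^{2}=\sqrt{C_{0}}\,\lambda^{3/2}$. To propagate $|\mathcal{T} f(x)|\gtrsim \lambda^{3/2}$ over a $\lambda^{-1}$-neighborhood of $x_{0}$ in $\gamma$, I would use the crude pointwise bound $|\partial_{x}K_{\mathcal{T}}(x,y)|\lesssim \lambda^{5/2}$ (the extra $\lambda$ comes from differentiating the phase) together with $\|K_{\mathcal{T}}(x_{0},\cdot)\|_{L^{1}(M)}\lesssim \lambda^{3/2}$ to conclude $|\partial_{x}K_{\mathcal{T}\mathcal{T}^{*}}(x,x_{0})|\lesssim \lambda^{4}$. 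Then
$$|K_{\mathcal{T}\mathcal{T}^{*}}(x,x_{0})-K_{\mathcal{T}\mathcal{T}^{*}}(x_{0},x_{0})|\lesssim \lambda^{4}|x-x_{0}|\leq \frac{C_{0}}{2}\lambda^{3}$$
whenever $|x-x_{0}|\leq c_{*}/\lambda$ for $c_{*}$ sufficiently small, which forces $|\mathcal{T} f(x)|\geq \frac{1}{2}\sqrt{C_{0}}\,\lambda^{3/2}$ on this interval. Integrating,
$$\|\mathcal{T} f\|_{L^{p}(\gamma)}^{p}\geq \int_{|x-x_{0}|\leq c_{*}/\lambda}|\mathcal{T} f(x)|^{p}\,dx\gtrsim \lambda^{3p/2}\cdot\lambda^{-1},$$
which yields $\|\mathcal{T}\|_{\mathcal{L}(L^{2}(M);L^{p}(\gamma))}\geq \|\mathcal{T} f\|_{L^{p}(\gamma)}\gtrsim \lambda^{3/2-1/p}$.

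The main obstacle I anticipate is the persistent lower bound for $|K_{\mathcal{T}\mathcal{T}^{*}}(x,x_{0})|$ on the natural $\lambda^{-1}$-scale. The crude derivative estimate above is wasteful but sufficient; the essential input is positivity of the scalar $\int \cos^{2}\theta_{x_{0},\omega}\,|a|^{2}\,dy$, which is precisely where the factor $\cos\theta_{x,\omega}$ in $T_{\lambda}$ (so important for the upper bound) would be fatal to the lower bound were it not for the fact that the directions parallel to $\gamma$ form a measure-zero set in $\mathbb{S}^{1}$. The lower-order contributions from $\partial_{\nu_{x}}a$ and $R_{\lambda}$ are smaller by at least $\lambda^{-1}$ and do not disturb the leading-order positive constant.
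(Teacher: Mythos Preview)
Your argument is correct and runs parallel to the paper's, though the packaging differs.

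The paper also works at the $TT^{*}$ level: it reduces (via Section~\ref{section2}) to showing $\|T_{\lambda}^{r}(T_{\lambda}^{r})^{*}\|_{\mathcal{L}(L^{p'}(\gamma);L^{p}(\gamma))}\geq c\lambda^{-2/p}$ and tests this operator against a bump $f(y)=\lambda^{1/p'}\phi(\lambda y/\epsilon)$ living on $\gamma$. The positivity input is the same as yours: on the $\lambda^{-1}$ scale the phase does not oscillate, and the integrand $\cos\theta_{x,\omega}\cos\theta_{y,\omega}\,|a_{r}|^{2}$ is bounded below once one restricts to angles $\omega$ away from the direction of $\gamma$ (the paper inserts an explicit angular cutoff $\chi_{\alpha}$ to enforce this). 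Where you diverge is in (i) working directly with the full operator $\mathcal{T}$ rather than the polar slice $T_{\lambda}^{r}$, and (ii) using the coherent-state test $f=c_{\lambda}\overline{K_{\mathcal{T}}(x_{0},\cdot)}\in L^{2}(M)$ together with a mean-value argument (the crude bound $|\partial_{x}K_{\mathcal{T}\mathcal{T}^{*}}|\lesssim\lambda^{4}$) to propagate the lower bound over a $\lambda^{-1}$-arc, whereas the paper bounds the real part of the kernel below uniformly on that arc. Your route has the merit of sidestepping the passage from a lower bound on $\|T_{\lambda}^{r}\|$ back to one on $\|\mathcal{T}\|$, which for \emph{lower} bounds is not the automatic Minkowski step it was for the upper bound and which the paper leaves implicit; the paper's route, in turn, avoids your derivative estimate, which is wasteful but, as you note, harmless at this scale.
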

\begin{proof}
From the discussion of Section \ref{section2}
it suffices to show
\begin{equation}
\|T_\lambda^r(T_\lambda^r)^*\|_{\mathcal{L}( L^{p'}(\gamma); L^p(\gamma) )} \geq c\lambda^{-\frac{2}p}.
\end{equation}
Recall that the integral kernel of $T_\lambda^r (T_\lambda^r)^*$ is
\begin{equation*}
K(x, y)=\int_{0}^{2\pi}  \cos\theta_{x,\omega(w)} \cos\theta_{y,\omega(w)}\, e^{i\lambda |x-y|\Phi(x, y, \sigma, \omega(w))}a_r(x, \omega(w), \lambda) \overline{a_r}(y,\omega(w), \lambda) dw.
\end{equation*}
If $|x-y|\leq 2\epsilon \lambda^{-1}$, the oscillatory factor $e^{i\lambda |x-y|\Phi(x, y, \sigma, \omega(w))}$ does not oscillate any more.
Let
\begin{align}
\Re(K_1(x,y))& \nonumber \\
:=\int_{0}^{2\pi}  \cos&\theta_{x,\omega(w)} \cos\theta_{y,\omega(w)} \chi_\alpha(w)\, \Re\Big(e^{i\lambda |x-y|\Phi(x, y, \sigma, \omega(w))}\Big)a_r(x, \omega(w), \lambda) \overline{a_r}(y,\omega(w), \lambda) dw,
\end{align}
here $\chi_\alpha(w)$ is a cut off function such that $0\leq\max\{\theta_{x,\omega},\theta_{y,\omega}\}\leq \pi/4$ (Figure \ref{fig}). So for sufficiently small $\epsilon$, one has $\Re(K_1(x,y))\geq\beta>0$, for some constant $\beta$.
As a consequence, choosing a test function $f(y)=\lambda^{\frac{1}{p'}}\phi(\frac{\lambda y}{\epsilon})$, where $\phi\in C_0^\infty(-1,1)$, $\phi\geq0$, $\phi=1$ on $[-\frac{1}2, \frac{1}2]$ and $\frac{1}p+\frac{1}{p'}=1$, we obtain (for a constant $c>0$)
\begin{equation}
\Re\big(1_{|x|\leq \epsilon \lambda^{-1}} T_\lambda^r (T_\lambda^r)^* f\big) = 1_{|x|\leq \epsilon \lambda^{-1}} \int \Re(K_1(x, y)) f(y) dy
\geq c 1_{|x|\leq \epsilon \lambda^{-1}} \lambda^{\frac{1}{p'}-1}.
\end{equation}
Hence
\begin{equation}
\|T_\lambda^r (T_\lambda^r)^* f \|_{L^p}\geq \|T_\lambda^r (T_\lambda^r)^* f \|_{L^p(|x|\leq \epsilon \lambda^{-1})}  \geq \|\Re\big(1_{|x|\leq \epsilon \lambda^{-1}} T_\lambda^r (T_\lambda^r)^* f\big)\|_{L^p} \geq c \lambda^{\frac{1}{p'}-1-\frac{1}p}=c\lambda^{-\frac{2}p}.
\end{equation}

\end{proof}

We next show that the lower bound for the norm of the operator $\mathcal{T}$ from $L^2(M)$ to $L^p(\gamma)$ gives a lower bound for the operator
\begin{equation*}
\partial_\nu\Pi_\lambda= \partial_\nu 1_{\sqrt{-\Delta_g}-\lambda\in [0,\frac{1}2)} 
\end{equation*}
in the following sense:
\begin{equation}\label{subseq}
\exists\, c>0,\,\, \exists\, \lambda_n\to +\infty,\,\, f_n\in L^2(M) \quad\text{such that}\,\, \|\partial_\nu (\Pi_{\lambda_n} f_n) \|_{L^p(\gamma)}> c\lambda_n^{\frac{3}2-\frac{1}p} \|f_n\|_{L^2(M)}.
\end{equation}
Indeed, if the converse were true, there exists an $f\in L^2(M)$ such that
\begin{equation}\label{counter}
\|\partial_\nu(\Pi_{\lambda} f)\|_{L^p(\gamma)}< \epsilon(\lambda)\lambda^{\frac{3}2-\frac{1}p} \|f\|_{L^2(M)},\quad \text{with} \lim_{\lambda\to +\infty} \epsilon(\lambda)=0.
\end{equation}
Using a partition of unity
\begin{equation*}
1=\sum_{n\in \mathbb{Z}} \Pi_{\frac{n}2}
\end{equation*}
and inserting another projector $\tilde\Pi_\lambda= 1_{\sqrt{-\Delta_g}-\lambda\in [-1,1]} $, we obtain
\begin{equation}
\chi(\sqrt{-\Delta_g}-\lambda)=\sum_{n\in\mathbb{Z}} \Pi_{\frac{n}2}\tilde\Pi_{\frac{n}2} \chi(\sqrt{-\Delta_g}-\lambda),
\end{equation}
with noticing that $ \Pi_{\frac{n}2}\tilde\Pi_{\frac{n}2}= \Pi_{\frac{n}2}$.

By \eqref{counter} one has
\begin{equation*}
\|\partial_\nu \chi(\sqrt{-\Delta_g}-\lambda)\|_{\mathcal{L}(L^{2}(M); L^p(\gamma))} \leq \sum_n\epsilon(n) n^{\frac{3}2-\frac{1}p} \|\tilde\Pi_{\frac{n}2} \chi(\sqrt{-\Delta_g}-\lambda)\|_{\mathcal{L}(L^{2}(M)}.
\end{equation*}
But due to the rapid decay of the function $\chi$,
\begin{equation*}
\|\tilde\Pi_{\frac{n}2} \chi(\sqrt{-\Delta_g}-\lambda)\|_{\mathcal{L}(L^{2}(M)}\leq \frac{C_N}{(1+|\lambda-\frac{n}2|)^N}
\end{equation*}
and consequently we obtain
\begin{equation}
\|\partial_\nu \chi(\sqrt{-\Delta_g}-\lambda)\|_{\mathcal{L}(L^{2}(M); L^p(\gamma))} \leq o(1)\lambda^{\frac{3}2-\frac{1}p} \quad \text{as}\,\, \lambda\to +\infty,
\end{equation}
which is contradicting our lower bounds for the operator $\mathcal{T}$.

Now, if the manifold $M$ is a sphere and $\{\lambda_n\}$ is the sequence satisfying \eqref{subseq}, then  sequence $g_n=\Pi_{\lambda_n}f_n$ satisfies the claimed lower bound. Here the range of projector $\Pi_\lambda$ is the finite dimensional space of spherical harmonics of degree $k$ with eigenvalues $-k(k+1)$ if $\sqrt{k(k+1)}-\lambda\in[0,\frac{1}2)$.

\begin{rem} \cite{CHT} shows the optimality of \eqref{main eq} for $p=2$ by considering the highest weight spherical harmonics on the longitude. For $p=\infty$, \eqref{main eq} is optimized on a geodesic passing through the north pole with noticing that the zonal spherical harmonics have the supremum of $O(\lambda^{1/2})$ at the north pole and concentrate in $O(\lambda^{-1})$ neighborhoods of the north pole.
\end{rem}

\bibliography{reference}
\bibliographystyle{alpha}

\end{document}